\documentclass{article}
\usepackage{secondpreamble}
\usepackage{authblk}
\usepackage{todonotes}
\usepackage{xcolor}

\usepackage[
  backend=biber,
  style=numeric,
  sorting=none,
  giveninits=true
]{biblatex}
\renewbibmacro{in:}{}

\newcommand{\F}{\mathbb F}

\title{Cubes in the Torus}
\author{Douglas Barnes, Sean Jaffe\thanks{Department of Pure Mathematics and Mathematical Statistics, University of Cambridge. Emails: \texttt{db875@cam.ac.uk}, \texttt{scj47@cam.ac.uk}}}
\date{10th August 2026}

\begin{document}
\maketitle
\vspace{-2em}
\begin{abstract}
\noindent 

For $q> p$, let $T(n,q,p)$ be the minimum number of translates of the  cube \(\{0,1,\dots,p-1\}^n\) required to cover the $n$-dimensional torus $(\mathbb{Z}/q\mathbb{Z})^n$. We show that for each $q$ there exists a constant $1\le \Lambda_q \le 2$ such that $T(n,q,2)=(\Lambda_q + o(1))(q/2)^n$.
\end{abstract}
\section{Introduction}
For $q > p$, let $T(n,q,p)$ be the minimum number of translates of a $p\times p\times\dots \times p$ cube required to cover the $n$-dimensional torus
$(\mathbb{Z}/q\mathbb{Z})^n = \{0,1,\dots,q-1\}^n $.  Of course, we need at least $(q/p)^n$ cubes (because of a volume argument), and a simple probabilistic argument in which each possible translate of a  cube is selected independently with a suitable probability gives the best known upper bound of $T(n,q,p) = O(n(q/p)^n)$\cite{Bollobs2010,Bogdanov}. In this paper, we close this gap and calculate the true asymptotics whenever $p=2$:
\begin{theorem}\label{thm:main} For each $q\geq 3$, 
there is a  constant $1\le \Lambda_q\le 2$ such that
\[
T(n,q,2)=(\Lambda_q+o(1))\left(\frac{q}{2}\right)^n.
\]
Furthermore, when $q=3$, we have $\frac{7}{4}\leq \Lambda_3\leq 2$.
\end{theorem}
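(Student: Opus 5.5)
We will show that $a_n := T(n,q,2)/(q/2)^n$ converges, and the plan is a submultiplicativity argument together with explicit upper and lower bounds. The lower bound $a_n\ge 1$ is the volume bound. For the upper bound $a_n\le 2+o(1)$ we treat even and odd $q$ separately. For even $q$ the torus is tiled exactly by $(q/2)^n$ cubes. For odd $q=2m+1$ we go coordinate by coordinate: in one dimension $\mathbb{Z}/q\mathbb{Z}$ is covered by $m+1$ intervals of length $2$ with a single overlap. We then want to cover $(\mathbb{Z}/q)^n$ with only a $(2+o(1))$ factor of waste rather than $((m+1)/(q/2))^n$. To do this we use the product structure $T(n_1+n_2)\le T(n_1)T(n_2)$ (products of cubes are cubes) only on blocks. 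Within each block we use a cleverer covering: a cube $\{0,1\}^n$ shifted by $x$ covers $x+\{0,1\}^n$, and we choose the shifts as a union of a ``staggered'' lattice-like family so that each point is covered about twice on average.

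Concretely, the key step is to show that $\log a_n$ is essentially subadditive, or more precisely that $a_{n+k}\le a_n a_k$. This follows since $T(n+k)\le T(n)T(k)$ and the normalisations multiply. Fekete's lemma then gives that $a_n^{1/n}$ converges, but this only pins down exponential rates, which are already known to equal $q/2$. So we need a sharper \emph{almost-additive} statement $a_{n+1}\le a_n(1+\varepsilon_n)$ together with $a_{n+1}\ge a_n(1-\varepsilon_n)$, with $\sum\varepsilon_n<\infty$; then $a_n$ converges to some $\Lambda_q$, and $1\le\Lambda_q\le 2$ follows from the bounds above. For the upper direction we take an optimal cover of dimension $n$ and extend it to dimension $n+1$. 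For each value $t$ of the new coordinate we need the slice $\{x_{n+1}\in\{t,t+1\}\}$ handled. Using alternating layers (cubes at heights $0,2,4,\dots$, with the odd leftover layer covered by a \emph{different} cover shifted in the first $n$ coordinates), we aim to show that the leftover only costs a factor $1+O(1/q)$ per dimension. That is too large for convergence, so we instead must use randomisation: we place the extra layer's cubes randomly and repair the uncovered points greedily. Points uncovered form a set whose density decays in $n$, because a typical point of $(\mathbb{Z}/q)^n$ is covered by many cubes in a cover with $a_n>1$.

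The main obstacle is precisely this step: proving that $a_n$ actually converges rather than just being bounded between $1$ and $2$. I would attack it through the structure of coverings. Each cube $x+\{0,1\}^n$ in direction $i$ pairs up $x_i,x_i+1$, and by averaging over coordinates a near-optimal cover must look locally like a product of near-optimal one-dimensional covers. This should yield a compactness or limit-object description in which $\Lambda_q$ is an infimum over "periodic in most coordinates" constructions. Recursive constructions then approximate that infimum in all large $n$, while a lower-bound argument (double counting over slices) shows one cannot beat it. For $q=3$ the upper bound $2$ comes from the construction above. For the bound $\Lambda_3\ge 7/4$ I would use a weighting/LP argument on $\{0,1,2\}^n$: every point $y$ lies in $2^n$ possible cubes. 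By double counting over pairs of points $y,y'$ differing in one coordinate by the "wraparound" $2\to 0$, we show that a positive fraction of points is covered at least twice, forcing at least $\tfrac74(3/2)^n(1-o(1))$ cubes. Optimising the weights by a small linear program on the one-dimensional pattern gives the constant $7/4$.
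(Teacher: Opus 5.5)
Your volume lower bound and the exact tiling for even $q$ are fine. The central gap is the upper bound $T(n,q,2)\le 2(q/2)^n$ for odd $q$, which is the real content of the theorem, and you give no construction for it. The ``staggered lattice-like family'' covering each point about twice is precisely what has to be built. Products of small blocks lose a factor exponential in $n$: for odd $q$ every $a_k>1$, so $a_n\le a_k^{n/k}$ is useless. Choosing translates independently at random gives only $O(n(q/2)^n)$, the previously known bound.

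The missing idea is the paper's linear-algebraic construction for $q=3$. Identify $0,1,2$ with the nonzero vectors of $\mathbb{F}_2^2$ and set $[u,v]=u_1v_2+u_2v_1$, so that $s$ covers $x$ if and only if $[s_i,x_i]=1$ for all $i$. Suppose $W\le(\mathbb{F}_2^2)^n$ satisfies $W=W^\perp$ and every $w\in W$ has an even number of nonzero coordinates. Then $W\cap(\mathbb{F}_2^2\setminus\{0\})^n$ is a cover. Indeed, if $\mathbf{1}$ were not in the image of $w\mapsto[w,x]$, a vector $u$ with $u\cdot\mathbf{1}=1$ annihilating that image would give $y=(u_ix_i)_i\in W^\perp=W$ with odd support.

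The subspace $W_A=\{(a,Aa):a\in\mathbb{F}_2^n\}$ has these properties when $n$ is even and $A$ is the adjacency matrix of a graph with all degrees odd. Take such a graph uniformly at random, and let $R$ be the zero set of $a$, with $\emptyset\ne R\ne[n]$. Triangle toggles show that all pairs $(a_i,(Aa)_i)$ are nonzero with probability exactly $2^{1-|R|}$. So the expected cover size is $1+\sum_{k=1}^{n-1}{n\choose k}2^{1-k}<2(3/2)^n$. Odd $q=2a+3b$ is then handled by pulling back a $(3,2)$-cover along independent random recolourings $\pi_i\circ\phi$. Here $\phi:\mathbb{Z}/q\mathbb{Z}\to\mathbb{Z}/3\mathbb{Z}$ gives adjacent positions different colours, and the pull-back yields $T(n,q,2)\le(q/3)^nT(n,3,2)$.

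The step you single out as the main obstacle is actually immediate, and your proposed route to it rests on a false premise. In a cover with ratio $a_n$, the average number of cubes covering a point is exactly $a_n\le 2$, not ``many'', so the randomise-and-repair step has nothing to work with. Slice along the last coordinate instead. Each cube meets exactly $2$ of the $q$ hyperplanes $\{x_n=t\}$, and the cubes meeting a fixed hyperplane restrict to a cover of $(\mathbb{Z}/q\mathbb{Z})^{n-1}$. Hence $2T(n,q,2)\ge qT(n-1,q,2)$. So the lower half of your almost-additivity holds with $\varepsilon_n=0$: $a_n$ is nondecreasing, and together with $a_n\le 2$ this gives convergence. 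No upper almost-additivity or limit objects are needed.

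Finally, your plan to get $\Lambda_3\ge 7/4$ from a weighting/LP argument cannot work. Putting weight $2^{-n}$ on each of the $3^n$ translates is a fractional cover of total weight exactly $(3/2)^n$. By LP duality, any weighting bound therefore gives only $a_n\ge 1$, so integrality must enter. The paper does not prove this bound but cites it. The computed value $T(9,3,2)=68$, together with $T(n)\ge\lceil\tfrac32T(n-1)\rceil$, gives $a_n\ge 68/(3/2)^9>7/4$ for $n\ge 9$.
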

Here, the non-trivial lower bound of $T(n,3,2)\geq \left(\frac74 - o(1)\right)\left(\frac32\right)^n$ already follows from the work of Kolev~\cite{Kolev2013} and \"{O}sterg{\aa}rd and Riihonen~\cite{ annals}, thus our contribution is to prove the corresponding upper bound. When $(q,p)=(3,2)$, this problem is known as a football pool problem\cite{Kolev2013, annals,Footy}. In a football pool, each ticket records a prediction for each of \(n\) matches, with three possible outcomes for each match: home win, draw, or away win. A first prize is awarded to any ticket that correctly predicts all \(n\) outcomes, while a second prize is awarded to any ticket with exactly one incorrect prediction. Clearly, \(3^n\) tickets are necessary and sufficient to guarantee a first prize, while sphere-covering bounds show that \((1+o(1))3^n/(2n+1)\) tickets are necessary and sufficient to guarantee a second prize. Indeed, H{\"a}m{\"a}l{\"a}inen, Honkala,
             Litsyn and {\"O}sterg{\aa}rd, proved that one may cover the torus with at most \((1+o(1))3^n/(2n+1)\) sets of the form \(S_v = \{w\in \{0,1,2\}^n : w_i = v_i \text{ for all but at most one value of $i$} \}\) for $v\in \{0,1,2\}^n$ \cite{Game}. This paper gives exact asymptotics for the \emph{inverse football pool problem}, in which \(T(n,3,2)\) denotes the minimum number of tickets required to guarantee a ticket that makes an incorrect prediction for every one of the \(n\) matches.

 These discrete results also imply optimal bounds for coverings of the continuous torus. Let $\mu(n,\epsilon)$ denote the number of axis parallel $n$-dimensional cubes of side length $\epsilon$ required to cover $(\R/\Z)^n$, then $\mu(n,2/q)= T(n,q,2)$ \cite[Lemma 2]{Bogdanov}, hence Theorem \ref{thm:main} establishes the optimal asymptotics of $\mu(n,2/3)$. The generalisation of covering $(\Z_q)^n$ by $(\Z_p)^n$-cubes (or equivalently, $\mu(n,p/q)$) has been studied \cite{annals}, and in light of Theorem \ref{thm:main} we expect the ratio $T(n,q,p)/(q/p)^n$ to be bounded.  
 
\section{Statement on the use of AI}
Generative AI tools were used for all the proofs in this paper, which also provided a Lean formalisation \cite{lean}. All arguments were subsequently checked, edited, and presented by the authors, who take full responsibility for the correctness of the results.  No AI was used to write the introduction or commentary. 

\section{Proof of Theorem \ref{thm:main}}
 We split the proof of Theorem \ref{thm:main} into the following two parts, from which Theorem \ref{thm:main} follows. The first result (Theorem \ref{thm:main:first}) proves the result when $q = 3$, and Theorem \ref{thm:main:second} lifts this result to all $q\geq 3$. 
\begin{theorem}\label{thm:main:first} For all $n\geq 10$,
\[
\frac{7}{4}\left(\frac{3}{2}\right)^n\leq T(n,3,2)\leq 2\left(\frac{3}{2}\right)^n.
\]

\end{theorem}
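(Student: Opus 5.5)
The lower bound $T(n,3,2)\ge\frac74(\frac32)^n$ for $n\ge 10$ I would not reprove: as the introduction says, it follows from Kolev~\cite{Kolev2013} and \"{O}sterg{\aa}rd--Riihonen~\cite{annals}. I would only check that their bound is stated, or can be made, non-asymptotic from $n\ge 10$ onward, by tracking the constants in their counting argument. The real work is the upper bound $T(n,3,2)\le 2(\frac32)^n$.

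\textbf{Reformulation.} A translate of $\{0,1\}^n$ by $a$ is $\prod_i\{a_i,a_i+1\}$, which in each coordinate omits exactly one value $c_i=a_i+2$. So the translate with ``missing vector'' $c$ covers exactly those $x$ with $x_i\ne c_i$ for every $i$. Hence $T(n,3,2)$ is the minimum size of a set $C\subseteq\mathbb F_3^n$ such that every $x$ has some $c\in C$ differing from $x$ in every coordinate. Since $-1=2$ in $\mathbb F_3$, this says $C+\{1,2\}^n=\mathbb F_3^n$.

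\textbf{Products alone are not enough.} Such covers tensor: if $C_1$ works in dimension $n_1$ and $C_2$ in dimension $n_2$, then $C_1\times C_2$ works in dimension $n_1+n_2$, so $T$ is submultiplicative. In dimension $2$ the diagonal $\{(0,0),(1,1),(2,2)\}$ works. But this only gives a ratio $(4/3)^{n/2}$ over the volume bound, which is unbounded. I therefore need a construction that loses only a constant factor overall.

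\textbf{Proposed construction.} I would build a two-coordinate-step recursion with \emph{partial} covers. Write $x=(u,v,x')$ with $(u,v)\in\mathbb F_3^2$, and set
\[ C=\bigcup_{(a,b)\in\mathbb F_3^2}\{(a,b)\}\times D_{ab}. \]
Then $x$ is covered iff $x'$ is covered by some $D_{ab}$ with $a\ne u$ and $b\ne v$. The admissible pairs $(a,b)$ form the $2\times 2$ subgrid obtained by deleting row $u$ and column $v$. So the requirement is that for every such subgrid, the union of the four sets covered by its $D_{ab}$ is all of $\mathbb F_3^{n}$.

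I would take each $D_{ab}$ from a small family $A_1,\dots,A_k$ of sets in dimension $n$, each covering a prescribed fraction of the space. The labelling $(a,b)\mapsto$ label is chosen, like a Latin-square design, so that every $2\times2$ subgrid meets a collection of labels whose covered sets jointly exhaust $\mathbb F_3^n$. Tracking the vector of sizes $(|A_1|,\dots,|A_k|)$ turns the recursion into a linear map on size vectors. The goal is that this map has Perron eigenvalue exactly $9/4$, with an eigenvector whose total, normalised against the base cases, stays at most $2(\frac32)^n$. The bound would then follow by induction in steps of two dimensions from explicit small base cases ($n=1,2$, and a few more checked directly). The threshold $n\ge10$ only concerns the lower bound.

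\textbf{Main obstacle.} The hard step is choosing the family $A_1,\dots,A_k$ and the labelling so that the recursion is simultaneously closed (each $A_j$ is again built from the $A$'s) and loses nothing beyond the constant. A cruder scheme reintroduces a multiplicative loss per step. My first attempt would be $k=2$, with $A_1$ a full cover of $\mathbb F_3^n$ and $A_2$ a cover of exactly the points whose last coordinate lies in a fixed two-element set. I would search, by hand or computer, over labellings of the $3\times3$ grid for one giving the eigenvalue $9/4$. If that fails, I would allow partial covers defined by the parity of the number of coordinates equal to a fixed symbol, which behave well under the two-coordinate extension.
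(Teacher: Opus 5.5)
Your reformulation ($C+\{1,2\}^n=\mathbb{F}_3^n$) and your remark that products only give $T(n+2)\le 3T(n)$ are correct. However, the upper bound is never proved: the family $A_1,\dots,A_k$ and the labelling are left to a search, and that search cannot succeed. In your scheme each set in dimension $n+2$ is a disjoint union of sets $\{(a,b)\}\times A_\ell$. So the size vector evolves by a matrix $N$ whose entry $N_{j\ell}$ counts the grid cells labelled $\ell$ in the construction of $A_j$. Thus $N$ is a nonnegative integer matrix, and its Perron root is a root of the monic integer polynomial $\det(xI-N)$, hence an algebraic integer. But $9/4$ is rational and not an integer, so it is not an algebraic integer, and your stated goal is impossible.

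The same obstruction rules out every variant. By Cayley--Hamilton the sizes $s_m$ of the covers produced satisfy an integer linear recurrence, so $\limsup_m s_m^{1/m}=|\mu|$ for some root $\mu$ of a monic integer polynomial. The bounds $(3/2)^{n_0+2m}\le s_m\le 2(3/2)^{n_0+2m}$ would force $|\mu|=9/4$, making $\mu\bar\mu=81/16$ an algebraic integer, which is absurd. Hence every finite-type recursion of this kind loses an exponential factor over the volume bound. This covers any family, any step length $m$ (no algebraic integer has modulus $(3/2)^m$), symmetric images of the $A_\ell$, and your parity-defined partial covers.

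Your first attempt fails even more directly. Copies of $A_2$ never cover points whose last coordinate is the excluded symbol, so every $2\times 2$ subgrid needs a copy of $A_1$. Two copies can always be dodged by deleting one row and one column, so three are needed, and you are back to $T(n+2)\le 3T(n)$.

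What is missing is a construction whose size is an average rather than an integer combination of smaller sizes. The paper encodes $0,1,2$ as the nonzero vectors of $\mathbb{F}_2^2$, under which two symbols differ exactly when $u_1v_2+u_2v_1=1$. Suppose $W\le(\mathbb{F}_2^2)^n$ satisfies $W=W^\perp$ for the summed form, and every $w\in W$ has an even number of nonzero coordinates. Then $W\cap(\mathbb{F}_2^2\setminus\{0\})^n$ covers the torus. Otherwise a functional $u$ separating $\mathbf{1}$ from $\{([w_i,x_i])_i : w\in W\}$ yields $y=(u_ix_i)_i\in W^\perp=W$ with an odd number of nonzero coordinates.

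Both conditions hold for $W_A=\{(a,Aa):a\in\mathbb{F}_2^n\}$ when $n$ is even and $A$ is the adjacency matrix of a graph with all degrees odd. For a uniformly random such graph, a vector $a$ with $k$ zeros, $0<k<n$, gives a full-support codeword with probability exactly $2^{1-k}$. This is because triangle toggles make the relevant cross-degree parities uniform subject to one parity constraint. Summing gives $\mathbb{E}|\mathcal{S}_A|=2(3/2)^n-1-2^{1-n}$, and odd $n$follows from $T(n)\le\frac23 T(n+1)$.

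For the lower bound, no constant tracking is needed. It follows from the computed value $T(9)=68$ together with $T(n)\ge\lceil\frac32 T(n-1)\rceil$, since $68(2/3)^9>7/4$.
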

The lower bound in Theorem \ref{thm:main:first} already follows from the literature and we do not prove it here. Indeed, a computer-assisted search showed that $T(9) = 68$ \cite{Kolev2013,computer}, which combined with the inequality $T(n)\geq \lceil \tfrac{3}{2}T(n-1)\rceil$ \cite[Theorem 3.3]{annals} yields the lower bound.
\begin{theorem}\label{thm:main:second}
Fix $p\ge 2$. Suppose
\(
q=ap+b(p+1)\) for some
\(
a,b\in\mathbb Z_{\ge 0}\) such that $q >p$. Then, for every $n\ge 0$,
\[
\left(\frac{q}{p}\right)^n\leq  T(n,q,p)\le
\left(\frac{q}{p+1}\right)^n
T(n,p+1,p).
\]
\end{theorem}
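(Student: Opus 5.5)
The lower bound is the volume bound: each translate of $\{0,\dots,p-1\}^n$ has $p^n$ points, so at least $(q/p)^n$ translates are needed. For the upper bound I would "pull back" an optimal covering of $(\mathbb Z/(p+1)\mathbb Z)^n$ to $(\mathbb Z/q\mathbb Z)^n$ through a labelling $f\colon\mathbb Z_q\to\mathbb Z_{p+1}$, applied coordinatewise.

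\textbf{A symmetry of the small torus.} In $\mathbb Z_{p+1}$, a translate $\{0,\dots,p-1\}+t$ is exactly $\mathbb Z_{p+1}\setminus\{t-1\}$. So a family of cubes indexed by missing points $v\in V\subseteq\mathbb Z_{p+1}^n$ covers the torus if and only if for every $x$ some $v\in V$ has $v_i\neq x_i$ for all $i$. This condition is invariant under applying an arbitrary permutation $\sigma_i$ of $\mathbb Z_{p+1}$ in each coordinate. Hence from one optimal covering $V$ with $|V|=T(n,p+1,p)$ we get a whole family of coverings $\sigma(V)$.

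\textbf{The one-dimensional gadget.} I want a labelling $f\colon\mathbb Z_q\to\mathbb Z_{p+1}$ such that, for each label $s$, the set $f^{-1}(\mathbb Z_{p+1}\setminus\{s\})$ is a disjoint union of some number $m_s$ of cyclic intervals of length $p$. Call this family of intervals $J_s$.
\begin{itemize}
\item \emph{Construction of $f$.} Write $\mathbb Z_q$ cyclically as $b$ consecutive blocks labelled $0,1,\dots,p$ (length $p+1$), followed by $a$ blocks labelled $0,1,\dots,p-1$ (length $p$).
\item \emph{Gap check.} The condition is that between consecutive occurrences of each label $s$ (cyclically), the number of positions is a multiple of $p$. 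Inside the run of long blocks every gap equals $p$. Passing from a long block to a short block, or from a short block to a short block, each label $t<p$ again has gap $p$. Label $p$ is skipped in each short block, so its gap grows by exactly $p$ per short block.
\item \emph{Degenerate cases.} If $b=0$, label $p$ never occurs, and $f^{-1}(\mathbb Z_{p+1}\setminus\{p\})=\mathbb Z_q$ is tiled by $a$ intervals. If $a=0$, the pattern is purely periodic.
\item \emph{Count.} Each point of $\mathbb Z_q$ lies in exactly $p$ of the sets $f^{-1}(\mathbb Z_{p+1}\setminus\{s\})$, and each interval has length $p$. Therefore $\sum_s m_s = pq/p = q$.
\end{itemize}

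\textbf{Lifting.} Given a covering $V$ of $\mathbb Z_{p+1}^n$, take all products $I_1\times\dots\times I_n$ with $v\in V$ and $I_i\in J_{v_i}$; each is a translate of the $p$-cube in $\mathbb Z_q^n$.
\begin{itemize}
\item \emph{Covering.} Let $y\in\mathbb Z_q^n$ and set $x=f(y)$. Some $v\in V$ has $v_i\neq x_i$ for all $i$, so $y_i\in f^{-1}(\mathbb Z_{p+1}\setminus\{v_i\})$. Thus $y_i$ lies in some $I_i\in J_{v_i}$ for every $i$, and $y$ is covered.
\item \emph{Number of cubes.} This is $\sum_{v\in V}\prod_i m_{v_i}$, which depends on the labels and is not uniform.
\item \emph{Averaging.} Apply the symmetry and replace $V$ by $\sigma(V)$, where each $\sigma_i$ is an independent uniformly random cyclic shift of $\mathbb Z_{p+1}$. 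Each $\sigma(v)_i$ is then uniform, independently across $i$. So the expected count is
\[
|V|\prod_i\Big(\tfrac{1}{p+1}\sum_s m_s\Big)=T(n,p+1,p)\Big(\tfrac{q}{p+1}\Big)^n,
\]
and some choice of shifts achieves at most this.
\end{itemize}

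\textbf{Main obstacle.} The step I expect to be hardest is the design of $f$. The naive approach, partitioning $\mathbb Z_q$ into intervals of lengths $p$ and $p+1$ and covering each box separately, fails: a complement of an interior point of a length-$(p+1)$ interval is not an interval. The interleaved labelling avoids this by letting the tiling of $f^{-1}(\mathbb Z_{p+1}\setminus\{s\})$ use intervals that straddle neighbouring blocks. The remaining care is in the gap bookkeeping across block boundaries and the cyclic wrap-around.
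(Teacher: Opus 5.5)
There is a genuine gap, and it is in the step you flagged as the main obstacle: your labelling $f$ does not have the tiling property. The gap check fails at every transition \emph{out of} a short block. A label $t<p$ sits at offset $t$ in a short block and at offset $t$ in the next block. Its two occurrences are therefore $p$ apart, with only $p-1$ positions strictly between them. This happens at each short-to-short transition and at the cyclic wrap-around from the last short block to the first long block. So for $a\ge 1$ the set $f^{-1}(\mathbb{Z}_{p+1}\setminus\{t\})$ has maximal runs of length $p-1$, and cannot be a disjoint union of length-$p$ intervals. Your degenerate case $b=0$ fails for these labels too; only $a=0$ works. A counting check confirms this: each $t<p$ occurs $a+b$ times, so $|f^{-1}(\mathbb{Z}_{p+1}\setminus\{t\})|=a(p-1)+bp$, which is divisible by $p$ only when $p\mid a$. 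For example, with $p=2$, $q=5$ your word is $f=(0,1,2,0,1)$, and $f^{-1}(\{1,2\})=\{1,2,4\}$ has odd size.

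The rest of your argument (pull back, cover, average over coordinatewise symmetries) is sound, and the repair is small. Keep your block word, but call it $\phi$ and use only the property that any $p$ cyclically consecutive positions receive distinct colours. Then define $f(x)$ to be the unique colour missing from $\phi(x-p),\dots,\phi(x-1)$.
\begin{itemize}
\item For each $s$, the intervals $\{t+1,\dots,t+p\}$ with $\phi(t)=s$ are pairwise disjoint, since two $s$-coloured positions are at cyclic distance at least $p$.
\item A point $x$ lies in one of them iff $s\in\{\phi(x-p),\dots,\phi(x-1)\}$, i.e.\ iff $f(x)\neq s$.
\item So $J_s$ exists with $m_s=|\phi^{-1}(s)|$, and $\sum_s m_s=q$.
\end{itemize}
For $p=2$, $q=5$ this gives $f=(2,2,2,0,1)$. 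In fact, any labelling with your tiling property and no two equal adjacent labels is forced to be $(p+1)$-periodic. So a word that is rainbow on every $p$-window can serve directly as $f$ only when $a=0$.

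With the corrected $f$, your lifted family is exactly the paper's $\widetilde{\mathcal S}$: the cube with start $t$ is taken iff $(\phi(t_1),\dots,\phi(t_n))$ lies in the (permuted) small cover. The paper reaches this family without the intermediate $f$. For fixed $x$, the starts $T_i=\{x_i-p,\dots,x_i-1\}$ carry $p$ distinct colours, and the missing colours $y_i$ define a point $y$ of the small torus. An element $s$ of the small cover covering $y$ then picks out, in each coordinate, the start in $T_i$ of colour $s_i$. Randomising by cyclic shifts of $V$ rather than by permutations of $\phi$ makes no difference, since only independent uniform marginals are used.
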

The lower bound in Theorem \ref{thm:main:second} follows immediately from a volume argument, so we only prove the upper bound. Before proceeding with the proofs of Theorems \ref{thm:main:first} and \ref{thm:main:second}, we deduce Theorem \ref{thm:main} from them:
\begin{proof}[Proof of Theorem \ref{thm:main}] Fix $q\geq3$, and define $\rho_n = T(n,q,2)/(q/2)^n$. Note that every $q\geq3$ can be expressed as $2a +3b$ for some $a,b\in \mathbb{Z}_{\geq 0}$ with $a$ and $b$ not both $0$. Thus, by Theorems \ref{thm:main:first} and \ref{thm:main:second}, $\rho_n$ is bounded above by 2. Furthermore, by \cite[Theorem 3.3]{annals}, $\rho_n$ is increasing, hence converges to a constant $\Lambda_{q}\leq 2 $ as $n\to\infty$. By the lower bounds in Theorems \ref{thm:main:first} and \ref{thm:main:second}, we have: $\Lambda_3\geq \frac{7}{4}$ and $\Lambda_q\geq 1$ for all $q$. This proves Theorem \ref{thm:main}.
\end{proof}


\section{Proof of Theorem \ref{thm:main:first}}
As discussed, we need only prove the upper bound in Theorem \ref{thm:main:first}.  For $s=(s_1,\dots,s_n)\in\{0,1,2\}^n$, define the $s$-translated cube
\[
Q_s=\{x\in\{0,1,2\}^n:x_i\neq s_i\text{ for every }i\}=s+\{1,2\}^n,
\]
For $x,y\in\{0,1,2\}^n$, we say $x$ covers $y$ if $y\in Q_x$. If $\mathcal{S}\subseteq \{0,1,2\}^n$, we say that $\mathcal{S}$ covers the torus if for all $x\in \{0,1,2\}^n$, there exists some $s\in \mathcal{S}$ such that $s$ covers $x$. Thus, it suffices to construct some $\mathcal{S}$ which covers the torus that satisfies $|\mathcal{S}|\leq 2 \left(\frac{3}{2}\right)^n$.

In the proof of Theorem \ref{thm:main:first}, we first identify each element of $(\mathbb{Z}/ 3 \mathbb{Z})^n$ with a vector in $(\mathbb{F}_{2}^2\backslash \{0\})^n$. We also define a bilinear form $[ \cdot,\cdot]$ on $(\mathbb{F}_2^2)^n$ with the following nice property. If $x  $ and $y$ are elements of $ (\mathbb{Z}/3\mathbb{Z})^n$ which are associated to vectors $v_x$ and $v_y$ respectively, then $y$ covers $x$ if and only if $[ v_x,v_y] = \one$. This trick reformulates the covering condition into a purely linear-algebraic statement. This allows us to prove that if $W$ is a vector subspace of $\mathbb{F}_2^{2n}$ satisfying certain properties, then $W\cap (\mathbb{F}_2^2\backslash \{0\})^n$ corresponds under the above identification to a cover of the torus. 

Next, we construct such subspaces $W$ from the adjacency matrices of graphs where every vertex has odd degree. Finally, by selecting a uniformly random graph where every degree is odd, we show that one of the resulting covers has size $\leq 2 \left(\frac32\right)^n$.

For convenience, let $T(n)=T(n,3,2)$.  We identify the three symbols $0,1,2$ of $\mathbb{Z}/3\mathbb{Z}$ with the three nonzero vectors of $\F_2^2$ as follows:
\[
0\longleftrightarrow(1,0),
\qquad
1\longleftrightarrow(0,1),
\qquad
2\longleftrightarrow(1,1).
\]

On $\F_2^2$, define the symmetric bilinear form $[u,v]=u_1v_2+u_2v_1$, which for nonzero $u,v\in\F_2^2$ satisfies
\[
[u,v]=
\begin{cases}
0,&u=v,\\
1,&u\neq v.
\end{cases}
\]

and note $x\in Q_s$ if and only if
\begin{equation}\label{eq:coordinate-cover}
[s_i,x_i]=1
\qquad\text{for every }i.
\end{equation}
Additionally, for $u = (u_1,\dots,u_n), v = (v_1,\dots,v_n)\in(\mathbb F_2^2)^n$, we extend this notation
coordinate-wise by setting
\[
[u,v]:=\bigl([u_1,v_1],\ldots,[u_n,v_n]\bigr)\in\mathbb F_2^n.
\]
Thus $x\in Q_s$ if and only if $[s,x] = \one$. 
We also define $\langle u,v\rangle= \one \cdot [u,v] = \sum_{i=1}^n [u_i,v_i]$, where $\cdot$ denotes the standard Euclidean inner product on $\F^n_2$. The following lemma shows that subspaces of $(\F_2^2)^n$ can give coverings of the torus. 

\begin{lemma}[Covering lemma]\label{lem:symplectic}
Let $W\le (\mathbb{F}_2^{2})^n$ be a vector subspace. Suppose $W$ satisfies $W = W^{\perp}$ and for every $w = (w_1,\dots,w_n)\in W$, the number of indices $i\in [n]$ for which $w_i \neq (0,0)$ is even. Then
\(
\mathcal S=W\cap(\F_2^2\setminus\{0\})^n
\)
is a cover of the torus $\{0,1,2\}^n$ under the encoding above.
\end{lemma}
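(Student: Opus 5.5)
The plan is to reduce the covering condition to a statement about the image of a linear map, and then use $W=W^\perp$ together with the even-support hypothesis to show that $\one$ lies in that image. Fix $x=(x_1,\dots,x_n)\in(\F_2^2\setminus\{0\})^n$. We must find $s\in\mathcal S$ with $[s,x]=\one$. First observe that if $s\in W$ satisfies $[s,x]=\one$, then automatically $s\in\mathcal S$: $[s_i,x_i]=1$ forces $s_i\neq 0$, since $[0,x_i]=0$. So it suffices to find $s\in W$ with $[s,x]=\one$. Here $W^\perp$ is taken with respect to the form $\langle\cdot,\cdot\rangle$.

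Consider the $\F_2$-linear map $\varphi_x:W\to\F_2^n$ given by $w\mapsto[w,x]$. This is linear because $[\cdot,\cdot]$ is bilinear coordinatewise. We want $\one\in\operatorname{im}\varphi_x$. Over $\F_2$ with the standard dot product, a subspace equals its double orthogonal complement. Hence $\one\in\operatorname{im}\varphi_x$ if and only if $c\cdot\one=0$ for every $c\in\F_2^n$ satisfying $c\cdot[w,x]=0$ for all $w\in W$.

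So take such a $c$, and define $c\ast x\in(\F_2^2)^n$ by $(c\ast x)_i=c_ix_i$. By bilinearity,
\[
c\cdot[w,x]=\sum_i c_i[w_i,x_i]=\sum_i[w_i,c_ix_i]=\langle w,c\ast x\rangle .
\]
The assumption on $c$ therefore says that $c\ast x\in W^\perp=W$. Since every $x_i$ is nonzero, the coordinates $i$ with $(c\ast x)_i\neq(0,0)$ are exactly those with $c_i=1$. By the even-support hypothesis this set has even size, so $c\cdot\one=\sum_i c_i=0$. Thus $\one\in\operatorname{im}\varphi_x$, which gives the required $s\in W$ and hence $s\in\mathcal S$ covering $x$.

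There is no real obstacle here. The only step needing care is the duality fact that $\operatorname{im}\varphi_x=(\operatorname{im}\varphi_x)^{\perp\perp}$, which holds because the dot product on $\F_2^n$ is nondegenerate and dimensions add up. The other point to handle cleanly is moving the scalar $c_i$ inside the form in the identity $c\cdot[w,x]=\langle w,c\ast x\rangle$. The hypothesis $W=W^\perp$ is used only through the inclusion $W^\perp\subseteq W$.
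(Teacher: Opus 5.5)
Your proof is correct and follows the paper's argument: the paper also considers the map $w\mapsto[w,x]$ on $W$, takes a vector $u$ annihilating its image, forms $y_i=u_ix_i$ so that $y\in W^\perp=W$, and uses the even-support hypothesis to force $u\cdot\one=0$. The only difference is presentation: the paper argues by contradiction, assuming $\one$ is not in the image of $L_x$ and choosing a separating $u$, whereas you invoke the double-orthogonal-complement identity directly.
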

\begin{proof}[Proof of Lemma \ref{lem:symplectic}]
Fix $x=(x_1,\dots,x_n)\in (\mathbb{F}_2^2\backslash\{0\})^n$. Consider the linear map \(L_x:W\to\F_2^n\) defined by
\[
L_x(w)=[w,x].
\]
We claim that $\one\in\image L_x$. Suppose not, then since $\image L_x$ is then a proper subspace of $\mathbb{F}_2^n$ not containing $\one$,
there is a vector $u\in\F_2^n$ such that
\begin{equation}\label{eq:separator}
u\cdot L_x(w)=0\quad\text{for every }w\in W,
\qquad
u\cdot\one=1.
\end{equation}
Define $y\in (\F_2^2)^n$ by
\(
y_i=u_i x_i.
\)
The first condition in \eqref{eq:separator} gives that $\langle w,y\rangle=0$ for every $w\in W$, so $y\in W^\perp=W$. For $i\in [n]$, since $x_i\neq (0,0)$, $y_i \neq (0,0)$ if and only if $u_i = 1$. Thus, by the second condition in \eqref{eq:separator}, $|\{i \in [n] : y_i \neq (0,0)\}|$ is odd. This contradicts the second hypothesis in the statement of the lemma. Hence some $w\in W$ satisfies $L_x(w)=\one$. Then
$[w,x]=\one$, so $w_i$ is nonzero and different from $x_i$ for each $i$.
Thus $w\in\mathcal S$, and $x\in Q_w$.
\end{proof}

We now describe how to construct a subspace $W_A$ of $(\mathbb{F}_2^2)^n$ satisfying the hypotheses of Lemma \ref{lem:symplectic} from a matrix $A$ over $\mathbb{F}_2$ with certain properties. 

Let $A$ be a $n\times n$ matrix over $\mathbb{F}_2$. Define 
\[
W_A=\{(a,Aa):a\in\F_2^n\}\subseteq(\F_2^2)^n,
\]
where $(a,Aa) = ((a_1,(Aa)_1),(a_2,(Aa)_2),\dots,(a_n,(Aa)_n))$ is an element of $(\mathbb{F}_2^2)^n$ because each $(a_i,(Aa)_i)\in \mathbb{F}_2^2$. Since $(a,Aa) + (b,Ab) = (a+b,A(a+b))$ for $a,b\in \mathbb{F}_2^n$, it is clear that $W_A$ is a vector subspace of $(\mathbb{F}_2^2)^n$. 


\begin{lemma}\label{proposition_self_dual_odd_degree_graphs}
    Suppose that $n\ge2$ is even and let $A$ be a symmetric $n\times n$ matrix over $\mathbb{F}_2$ with $A_{ii}=0$ for every \(i\in [n]\). If $A\one=\one$, then $W_A = W_A^{\perp} $ and for every $w = (w_1,\dots,w_n)\in W_A$, the number of indices $i\in [n]$ such that $w_i\neq (0,0)$ is even.
\end{lemma}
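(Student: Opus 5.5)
We check three things in turn: that $W_A$ is isotropic, that it has dimension $n$ (so isotropy gives $W_A=W_A^\perp$), and that every element has an even number of nonzero coordinates. Here $\perp$ is with respect to $\langle\cdot,\cdot\rangle$ on $(\F_2^2)^n\cong\F_2^{2n}$.

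\emph{Isotropy.} For $u=(a,Aa)$ and $v=(b,Ab)$, unwind the definition coordinatewise:
\[
\langle u,v\rangle=\sum_i \bigl(a_i(Ab)_i+(Aa)_i b_i\bigr)=a^{T}Ab+(Aa)^{T}b=a^{T}Ab+a^{T}A^{T}b .
\]
Since $A=A^T$, this equals $2a^TAb=0$. Hence $W_A\subseteq W_A^\perp$.

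\emph{Dimension.} The map $a\mapsto(a,Aa)$ is injective, because the first components recover $a$. So $\dim W_A=n$. The form $\langle\cdot,\cdot\rangle$ is nondegenerate on $\F_2^{2n}$: on each coordinate pair its Gram matrix is $\begin{pmatrix}0&1\\1&0\end{pmatrix}$, which is invertible. Therefore $\dim W_A^\perp=2n-n=n$, and combined with isotropy this gives $W_A=W_A^\perp$.

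\emph{Parity of the support.} This is the only step needing an idea. Fix $w=(a,Aa)$ and set $b=Aa$. Coordinate $i$ is nonzero exactly when $a_i\vee b_i=1$. Over $\F_2$, $a_i\vee b_i=a_i+b_i+a_ib_i$, so the parity of the support is
\[
\sum_i a_i+\sum_i b_i+\sum_i a_ib_i .
\]
We evaluate each term using the hypotheses.

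\begin{itemize}
\item The first term is $\one\cdot a$.
\item For the second, $\sum_i(Aa)_i=\one^TAa=(A\one)^Ta=\one^Ta$, using symmetry and $A\one=\one$. So the first two terms cancel.
\item The third term is $a^TAa=\sum_{i,j}A_{ij}a_ia_j$. Off-diagonal terms pair up by symmetry, so over $\F_2$ this reduces to $\sum_iA_{ii}a_i^2$, which vanishes because $A_{ii}=0$.
\end{itemize}

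Hence the support size is even. The assumption that $n$ is even is not used here. It is implicitly forced anyway: applying $A\one=\one$ with $a=\one$ gives $\one^TA\one=\one^T\one=n$, while the computation above gives $\one^TA\one=0$, so the hypotheses already imply $n$ is even.
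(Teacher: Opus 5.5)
Your proof is correct and follows the paper's own argument essentially step for step: isotropy from $a^TAb+a^TA^Tb=0$, a dimension count to upgrade $W_A\subseteq W_A^\perp$ to equality, and the parity identity $\one^Ta+(A\one)^Ta+a^TAa=0$. Beyond the paper, you spell out the nondegeneracy of $\langle\cdot,\cdot\rangle$, which the paper uses implicitly, and you correctly observe that the other hypotheses already force $n$ to be even.
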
 
We note that whenever $n\geq 2$ is even, such a matrix $A$ must always exist, as, for example, one may take the adjacency matrix of a perfect matching on $n$ vertices. 
\begin{proof}[Proof of Lemma \ref{proposition_self_dual_odd_degree_graphs}]

Since $A$ is symmetric,
\[
\langle(a,Aa),(b,Ab)\rangle
=a^TAb+(Aa)^Tb
=a^T(A+A^T)b=0.
\]
Thus $W_A \subseteq W_A^{\perp}$. It has dimension $n$, half the dimension of $(\F_2^2)^n$, so $W_A=W_A^\perp$.

It remains to verify that $|\{i \in [n] : w_i \neq (0,0)\}|= |\{i \in [n] : (a_i,(Aa)_i) \neq (0,0)\}|$ is even for every $w\in W$, where $a$ is the element of $\mathbb{F}_2^n$ that satisfies $w = (a,Aa)$. For a binary pair $(u,v)$,
\[
\mathbf 1_{\{(u,v)\neq(0,0)\}}
=u+v+uv\pmod2.
\]
Therefore the parity of $|\{i \in [n] : w_i \neq (0,0)\}|$
\[
\begin{aligned}
\sum_i\bigl(a_i+(Aa)_i+a_i(Aa)_i\bigr)
&=\one^Ta+\one^TAa+a^TAa\\
&=\one^Ta+(A\one)^Ta+a^TAa\\
&=\one^Ta+\one^Ta+0\\
&=0.
\end{aligned}
\]
Here $a^TAa=0$ because $A$ is symmetric with zero diagonal: every off-diagonal
term occurs twice and the diagonal terms vanish. 

\end{proof}
Let $n\geq 2$ be even, and suppose $G$ is a graph with vertex set $[n]$ such that every vetex of $G$ has odd degree. It is clear that the adjacency matrix $A$ of $G$ satisfies all the hypotheses in Lemma \ref{proposition_self_dual_odd_degree_graphs}. Thus, by Lemma~\ref{lem:symplectic} and Lemma \ref{proposition_self_dual_odd_degree_graphs}, 
\begin{equation}\label{eq:SA}
\mathcal S_A
=\bigl\{(a,Aa):\ (a_i,(Aa)_i)\neq(0,0)\text{ for every }i, \ a\in\F_2^n\bigr\} 
\end{equation}
covers the torus. In the next lemma we use random graph arguments to show there exists a graph $G$ for which $|\mathcal{S}_A|$ is small.

\begin{lemma}\label{lemmathreepointtwo} Suppose $n\geq 2$ is even. Then there exists a graph $G$ on $[n]$ such that every vertex has odd degree and the adjacency matrix $A$ satisfies $|\mathcal{S}_A|< 2\left(\frac{3}{2}\right)^n$.   
\end{lemma}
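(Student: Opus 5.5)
The plan is a first-moment argument. Let $\mathcal G$ be the set of graphs on $[n]$ in which every vertex has odd degree. It is nonempty because it contains a perfect matching. Choose $G\in\mathcal G$ uniformly at random, with adjacency matrix $A$. It suffices to show $\mathbb E|\mathcal S_A|<2(3/2)^n$, since then some $G$ achieves this bound. The key structural fact is that $\mathcal G$, viewed as a set of edge vectors in $\F_2^{\binom n2}$, is a coset $G_0+\mathcal E$. Here $\mathcal E$ is the space of even graphs (all degrees even), and $G_0$ is any fixed odd-degree graph. So $A$ is uniform on an affine subspace, and every affine condition on $A$ holds with probability $0$ or $2^{-r}$, where $r$ is the rank of its linear part restricted to $\mathcal E$.

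By linearity of expectation, $\mathbb E|\mathcal S_A|=\sum_{a\in\F_2^n}\Pr[(a_i,(Aa)_i)\ne(0,0)\ \forall i]$. Let $S=\operatorname{supp}(a)$ and $k=|S|$. The condition says exactly that $(Aa)_i=1$ for every $i\notin S$, that is, every vertex outside $S$ has an odd number of neighbours in $S$. The extreme cases are immediate. If $a=0$ the condition is impossible, so the contribution is $0$. If $a=\one$ the condition is vacuous, so the contribution is $1$. For $0<k<n$, consider the affine map $\phi_S:A\mapsto((Aa)_i)_{i\notin S}\in\F_2^{S^c}$.

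The main step is to compute the image of the linear part of $\phi_S$ on $\mathcal E$. There is one obvious constraint. For an even graph, $\sum_{s\in S}\deg s=2e(S)+e(S,S^c)$ is even, so the coordinates of the image sum to $e(S,S^c)\equiv 0$. Hence the image lies in the sum-zero hyperplane of $\F_2^{S^c}$. Conversely, take $u,v\notin S$ and $s\in S$. The triangle $usv$ is an even graph, and its image is $e_u+e_v$. These vectors span the hyperplane, using that $n-k\ge2$; when $n-k=1$ the hyperplane is $\{0\}$ and there is nothing to prove. So the linear part has rank $n-k-1$.

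It remains to check that the target $\one_{S^c}$ lies in the image coset. For an odd-degree graph, $e(S,S^c)\equiv\sum_{s\in S}\deg s\equiv k$. The all-ones target has coordinate sum $n-k\equiv k$, because $n$ is even. So the parities agree and the event has probability exactly $2^{-(n-k-1)}$.

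Summing over $a$ gives
\[
\mathbb E|\mathcal S_A|=1+\sum_{k=1}^{n-1}\binom nk 2^{k+1-n}=1+2\Bigl(\bigl(\tfrac32\bigr)^n-1-2^{-n}\Bigr)=2\bigl(\tfrac32\bigr)^n-1-2^{1-n}.
\]
This is strictly less than $2(3/2)^n$, which proves the lemma. The only delicate point is the rank computation above, namely identifying the parity obstruction and showing it is the only one. The triangle construction handles this.
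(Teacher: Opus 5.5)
Your proposal is correct and takes essentially the same route as the paper. Both use a uniformly random odd-degree graph and linearity of expectation, then toggle triangles with one vertex in $\mathrm{supp}(a)$ and two outside it to show the parity vector is uniform subject only to the constraint $e(S,S^c)\equiv |S| \pmod 2$; this gives probability $2^{-(n-k-1)}$ and the same total $2(3/2)^n-1-2^{1-n}$. Your description of the odd-degree graphs as the coset $G_0+\mathcal E$, with a rank computation, is a slightly more rigorous formalisation of the paper's bijection-by-toggling step.
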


\begin{proof}[Proof of Lemma \ref{lemmathreepointtwo}]
Choose $G$ uniformly from the finite set of graphs on $[n]$ in which
every vertex has odd degree. Fix $a\in\F_2^n$ and write
\(
R=\{i:a_i=0\}\)

For $i\notin R$, $(a_i,(Aa)_i)\neq (0,0)$, so only the $i\in R$ impose constraints in the definition of $\mathcal{S}_A$  \eqref{eq:SA}. For $i\in R$,
it is nonzero exactly when
\begin{equation}\label{eq:cross-odd}
(Aa)_i=|N_G(i)\cap R^c|\equiv1\pmod2.
\end{equation}
One expects these $k$ parity conditions to roughly be independent fair coin tosses, but since $G$ is constrained to have odd degrees we must handle the dependence.  Suppose $\emptyset \neq R \subsetneq [n]$ and for $i\in R$ let $z_i = |N_G(i)\cap R^c| \mod 2$. Since every vertex in $R$ has odd degree,
\begin{equation}
    \sum_{i\in R} z_i= e(R,R^c) \equiv \sum_{i\in R} \deg_G (i) \equiv |R| \mod 2
\end{equation}
The vector $z=(z_i)_{i\in R}$ is uniformly distributed in the subset of $(\F_2)^{|R|}$ with coordinate sum equal to $|R| \mod 2$. Indeed, fix  $r \notin R$. If $i,j \in R$  are distinct, toggling the three edges of the triangle $\{r,i,j\}$ preserves the parity of every vertex while replacing $(z_i,z_j)\to (z_i+1,z_j+1)$. Any two vectors in $(\F_2)^{|R|}$ with the same coordinate sum parity differ in an even number of coordinates, so they can be bijectively mapped to each other by pairing those coordinates and applying triangle toggles. Hence 
\begin{equation}
    \mathbb{P}( (a,Aa) \in \mathcal{S}_A) = \mathbb{P} ( z_i = 1\text{ for every } i\in R)= 2^{-(|R|-1)}
\end{equation}
Thus,
\begin{equation}
    \mathbb{E}|\mathcal{S}_A| = 1+ \sum_{k=1}^{n-1} {n\choose k} 2^{-(k-1)} = 2(3/2)^n-1-2^{1-n}
\end{equation}
Therefore there exists some $G$ with every vertex of odd degree and adjacency matrix $A$ satisfying $|\mathcal{S}_A| <2(3/2)^n$. 

\end{proof}
We now conclude the proof of Theorem \ref{thm:main:first}. It follows from the previous lemma that for even $n$, $T(n) <2\left(\frac{3}{2}\right)^n $. This is also true for odd $n$ by projecting down from $n+1$  \cite[Theorem 3.3]{annals}, thereby completing the proof as desired.

\qed

\section{Proof of Theorem \ref{thm:main:second}}
As remarked upon previously, it suffices to only prove the upper bound.

Let $x,y\in\{0,1,\dots,q-1\}^n$. Similarly to before, say that $x$ covers $y$ for $(q,p)$ if $y\in x + \{1,\dots,p\}^n\subseteq (\mathbb{Z}/q\mathbb{Z})^n$. If $\mathcal{S}\subseteq (\mathbb{Z}/q\mathbb{Z})^n $, we say that $\mathcal{S}$ is a $(q,p)$-cover if for all $x\in \{0,1,\dots,q-1\}^n$, there exists a $y\in \mathcal{S}$ such that $y$ covers $x$ for $(q,p)$.

\begin{proof}[Proof of Theorem \ref{thm:main:second}]
Form a cyclic word of length $q$ by concatenating $a$ copies of
$0,1,\ldots,p-1$ and $b$ copies of $0,1,\ldots,p$; let
\[
\phi:\mathbb Z/q\mathbb Z
\to
\mathbb Z/(p+1)\mathbb Z
\]
be the resulting colouring. It is clear that every $p$ consecutive positions receive
distinct colours. Let $\mathcal{S}\subseteq(\mathbb Z/(p+1)\mathbb Z)^n$  be a minimal $(p+1,p)$-cover. Independently in each coordinate $i=1,\dots,n$ choose a
permutation $\pi_i$ of the $p+1$ colours uniformly at random and put
$\phi_i=\pi_i\circ\phi$. Each $\phi_i$ defines a colouring of $\mathbb{Z}/q\mathbb{Z}$ with $p+1$ colours. Define
\[
\widetilde{\mathcal{S}}
=
\bigl\{
t\in(\mathbb Z/q\mathbb Z)^n:
(\phi_1(t_1),\ldots,\phi_n(t_n))\in \mathcal{S}
\bigr\}.
\]

We claim for any choice of permutations $(\pi_i)_{i=1}^n$, $\widetilde{\mathcal{S}}$ is a $(q,p)$-cover. Fix
$x\in(\mathbb Z/q\mathbb Z)^n$. For each $i$, there are exactly $p$ intervals of length $p$ that contain $x_i$. These intervals are precisely $\{a,a+1,\dots,a+{p-1}\}$ for $a\in T_i$ where $T_i = \{x_i-p,x_i-p+1,\dots, x_i-1\}$ is the set of possible starts for such an interval. 

Each element of $T_i$ has a distinct colour according to $\phi_i$. Thus $\phi_i(T_i)$ forms a $p$-element subset of
$\mathbb Z/(p+1)\mathbb Z$, so by the pigeonhole principle for every $i$ there exists a unique  $y_i\in \mathbb{Z}/(p+1)\mathbb{Z}$ with $y_i \notin \phi_i(T_i)$. Since $\mathcal S$ is a $(p+1,p)$-cover, there exists some $s\in \mathcal{S}$ such that $s$ covers $y=(y_1,\ldots,y_n)$. In particular $s_i\neq y_i$ for each $i$, so $s_i\in \phi_i(T_i)$ i.e. there exists  $t_i\in T_i$ such that $\phi_i(t_i) = s_i$. It is clear that $(t_1,\dots,t_n)\in \widetilde{\mathcal{S}}$ and that $(t_1,\dots,t_n)$ covers $x$ for $(q,p)$. Thus $\widetilde{\mathcal{S}}$ is a $(q,p)$-cover.

Lastly, we argue by probability that there exist permutations $(\pi_i)_{i=1}^n$ with $|\widetilde{\mathcal{S}}|$ small. For fixed $t = (t_1,\dots,t_n)\in (\mathbb{Z}/q\mathbb{Z})^n$, the random vector
\(
(\phi_1(t_1),\ldots,\phi_n(t_n))
\)
is distributed uniformly on $(\mathbb Z/(p+1)\mathbb Z)^n$. Hence $\mathbb{P}(t\in \widetilde{\mathcal{S}}) = \frac{|\mathcal{S}|}{(p+1)^n}$ so 
\[
\mathbb E|\widetilde{\mathcal{S}}|
=
q^n\frac{|\mathcal S|}{(p+1)^n}.
\]
Thus there exists a choice of permutations $(\pi_i)_{i=1}^n$ which makes $|\widetilde{\mathcal{S}}|\leq \big(\frac{q}{p+1}\big)^n |\mathcal{S}|=\left(\frac{q}{p+1}\right)^n T(n,p+1,p) $. Thus,
\[T(n,q,p)\leq 
\big(\frac{q}{p+1}\big)^n T(n,p+1,p) 
\]

\end{proof}

\section{Acknowledgements}
The authors would like to thank Imre Leader for introducing the problem to them. The authors thank Julian Sahasrabudhe for helpful suggestions concerning the presentation of this manuscript.

\printbibliography
\end{document}